\newtheorem{theorem}{Theorem}[section]
\newtheorem{lemma}[theorem]{Lemma}
\newtheorem{corollary}[theorem]{Corollary}
\theoremstyle{definition}
\theoremstyle{remark}
\numberwithin{equation}{section}
\begin{document}

\title[ A generalization of Powers-St{\o}rmer  Inequality ] {A generalization of Powers-St{\o}rmer  Inequality }
\author[Anchal Aggarwal and Mandeep Singh]{Anchal Aggarwal and Mandeep Singh}
\address{Department of Mathematics, Sant Longowal Institute of Engineering and
Technology, Longowal-148106, Punjab, India} \email{anchal8692@gmail.com}\email{msrawla@yahoo.com} \keywords{Chernoff bound, Powers-St{\o}rmer  Inequality, Positive operator, Eigenvalues inequality.}

\subjclass[2010]{Primary 15A42; Secondary 15A45,15A60.}
\begin{abstract} Let $A,\;B$ be the positive semidefinite matrices. A matrix version of the famous Powers-St{\o}rmer's  inequality
$$2Tr(A^\alpha B^{1-\alpha})\geq Tr(A+B-|A-B|),\;\;\;0\leq\alpha\leq 1,$$
was proven by Audenaert et. al. We establish a comparison of eigenvalues for the matrices $A^\alpha B^{1-\alpha}$ and $A+B-|A-B|, \; 0 \leq \alpha \leq 1,$ subsuming the Powers-St{\o}rmer's  inequality.  We also prove several related norm inequalities.
\end{abstract}
\maketitle

\section{Introduction}
Let $M_n$ denote the algebra of all $n \times n$ complex matrices. A Hermitian member $A$ of $M_n$ with all non-negative eigenvalues is known as positive semi-definite matrix, simply denoted by $A\geq 0.$ We shall denote by $P_n,$ the collection of all such matrices. For $A,\;B$ Hermitian in $M_n,$ we employ the positive semi-definite ordering: $A\geq B$ if and only if $A-B\geq 0.$ By $|A|,$ we mean the positive square root of the matrix $A^*A,$ i.e., $(A^*A)^{1/2}.$ The Jordan decomposition of a Hermitian matrix $A$ is given by $A = A_+ - A_-, $ where $A_+$ and $A_-$ are the members of the $P_n$ along with $A_+ A_- =0$ (see  \cite{B}, page 99).
We shall consider
$\lambda_{1}(A)\geq\lambda_{2}(A)\geq\cdots\lambda_{n}(A)\geq 0,$ the eigenvalues of  $A\in P_n,$ arranged in
decreasing order and repeated according to their multiplicity.
Similarly $s_{1}(A)\geq s_{2}(A)\geq\cdots s_{n}(A)\geq 0,$ denote the singular values (eigenvalues of $|A|$) of a matrix $A\in P_n,$ arranged in
decreasing order and repeated according to their multiplicity. By $|||.|||,$ we mean any unitarily invariant norm, while $||. ||$ denotes operator norm on    $M_n.$

In 2007, Audenaert et. al. \cite{KM} solved a long standing open problem to identify
the classical quantum Chernoff bound in the area of information theory.
After the mathematical formulation of that problem, they proved a nontrivial
and fundamental inequality relating to the trace distance to the quantum Chernoff bound.
That became a key result to a solution of the problem and is stated as follows:\\
Let $A, \; B$ be positive matrices and $0\leq \alpha\leq 1.$ Then
\begin{equation}\label{E1} 2Tr (A^\alpha B^{1-\alpha})\geq Tr(A+B-|A-B|)\end{equation} holds. A particular case $\alpha=1/2$ in \eqref{E1} is a well known Powers-St{\o}rmer's  inequality \cite{PS}, which was proved in 1970. For such literature and detail of inequalities the reader may refer \cite{P}.
Subsequently in 2008, again Audenaert et. al. \cite{KMR} worked on symmetric as well as with asymmetric quantum
hypothesis testing. In \cite{KMR}  also, they proved some similar  type of  inequalities as that of \eqref{E1} which played a key role in getting the optimal solution to the symmetric classical hypothesis test.

In 2011, Y. Ogata \cite{YO} generalised the Powers-St{\o}rmer inequality to von Neumann algebras. Recently several authors including D. Hoa et. al. \cite{DO} generalised this inequality on $C^*-$algebras using the technique of operator monotone functions on $[0,\infty).$

We aim to prove the comparison of eigenvalues of $A+B-|A-B|$ and $2A^\alpha B^{1-\alpha},$ generalizing all the forms of Powers-St{\o}rmer's inequality. We shall also prove several other associated norm inequalities.

\section{Main Results}

\begin{lemma}\label{L1} Let $A ,\;B \in P_n$ then there exist a  matrix $S \in P_n$ satisfying
\begin{enumerate}[\rm{(}1{\rm)}]
              \item $S\leq A,\;S\leq B$
              \item if $T\leq A,\;T\leq B,$  is a fixed Hermitian matrix then $\lambda_{i}(T)\leq \lambda_{i}(S)$ for  $1\leq i\leq n.$
            \end{enumerate}
\end{lemma}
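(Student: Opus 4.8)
The plan is to write down $S$ explicitly, settle condition (1) by a one-line computation, and then concentrate all of the effort on condition (2); the membership $S\in P_n$ comes for free once (2) is known.

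Take
$$ S \;=\; \frac12\bigl(A+B-|A-B|\bigr), $$
which is Hermitian since $A$, $B$ and $|A-B|$ are. Using the Jordan decomposition $A-B=(A-B)_+-(A-B)_-$ and the identity $|A-B|=(A-B)_++(A-B)_-$ one has the presentations $S=A-(A-B)_+=B-(A-B)_-$, whence
$$ A-S=(A-B)_+\ge 0,\qquad B-S=(A-B)_-\ge 0, $$
which is exactly (1). Moreover the zero matrix is a Hermitian matrix satisfying $0\le A$ and $0\le B$, so applying (2) with $T=0$ yields $0=\lambda_i(0)\le\lambda_i(S)$ for every $i$; hence the Hermitian matrix $S$ has only nonnegative eigenvalues and $S\in P_n$. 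In particular (2) is the whole point.

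To prove (2), fix a Hermitian $T$ with $T\le A$, $T\le B$ and put $X=A-T\ge 0$, $Y=B-T\ge 0$, so that $X-Y=A-B$ and $S-T=\tfrac12\bigl((X+Y)-|X-Y|\bigr)$. One is tempted to deduce the operator inequality $T\le S$, i.e.\ $X+Y\ge|X-Y|$; this is false in general --- it would make $S$ a L\"owner infimum of $A$ and $B$, which the anti-lattice property of the Hermitian order forbids as soon as $A$ and $B$ are not comparable --- so only the weaker spectral comparison $\lambda_i(T)\le\lambda_i(S)$ can hold, and the argument must stay at the level of eigenvalues. The route I would take is variational. Let $p$ be the spectral projection of $A-B$ onto its nonnegative eigenvalues and $q=I-p$. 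Then $p(A-B)p\ge 0$ and $q(A-B)q\le 0$; the blocks of $S$ relative to $p\oplus q$ are $pSp=pBp$, $qSq=qAq$ and $pSq=pAq=pBq$; and compressing $T\le B$ by $p$ and $T\le A$ by $q$ gives $pTp\le pBp=pSp$ and $qTq\le qAq=qSq$. For a $k$-dimensional subspace $V$ and a unit vector $x\in V$ one splits $x=px+qx$, estimates $\langle Tx,x\rangle$ against $\langle Sx,x\rangle$ up to a cross term carried by the off-diagonal block, and feeds this into the Courant--Fischer / Ky~Fan min--max formula for $\lambda_i$.

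This recombination is the step I expect to be the main obstacle: $\operatorname{ran}p$ and $\operatorname{ran}q$ are orthogonal, but $T$, $A$ and $B$ need not leave them invariant, so the off-diagonal contributions $pTq$ and $pSq$ have to be matched and the residual error absorbed --- it is exactly here that the failure of the naive operator inequality is felt, and where one must exploit that $S$ and $T$ have a common off-diagonal block $pSq=pAq=pBq$ between the two spectral subspaces of $A-B$. An essentially equivalent formulation is to maximise $\lambda_i(T)$ over the closed convex set $\{\,T=T^*:T\le A,\ T\le B\,\}$; in that language the obstacle reappears as the need to show that a single maximiser can be taken simultaneously for all $i$ and coincides with $S$.
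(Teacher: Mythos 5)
There is a fatal problem with your choice of $S$: the matrix $S=\tfrac12(A+B-|A-B|)$ satisfies condition (1) exactly as you compute, but it need not lie in $P_n$, and condition (2) actually fails for it. Indeed, $T=0$ is a Hermitian matrix with $T\le A$ and $T\le B$, so (2) would force $\lambda_i(S)\ge 0$ for all $i$; but $A+B-|A-B|$ can have a negative eigenvalue. Take
$$A=\begin{pmatrix}1&0\\0&0\end{pmatrix},\qquad B=\frac12\begin{pmatrix}1&1\\1&1\end{pmatrix}.$$
Then $(A-B)^2=\tfrac12 I$, so $|A-B|=\tfrac{1}{\sqrt2}I$ and
$$\det\bigl(A+B-|A-B|\bigr)=\Bigl(\tfrac32-\tfrac{1}{\sqrt2}\Bigr)\Bigl(\tfrac12-\tfrac{1}{\sqrt2}\Bigr)-\tfrac14=1-\sqrt2<0,$$
so $A+B-|A-B|$ has one negative eigenvalue and $\lambda_2(0)=0>\lambda_2(S)$. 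Consequently no amount of work on the Courant--Fischer recombination step (which you correctly flag as the unfinished ``main obstacle'') can close the argument: the statement you are trying to prove for this $S$ is false, and your derivation of ``$S\in P_n$'' from (2) with $T=0$ was resting on the very claim that fails.

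The paper avoids this by building a genuinely different, manifestly positive semidefinite $S$: assuming $B$ invertible, write $B^{-1/2}AB^{-1/2}=U^*DU$ with $D$ diagonal and set $S=B^{1/2}U^*D_1UB^{1/2}$ where $D_1$ is obtained from $D$ by replacing each diagonal entry $d_i$ with $\min\{d_i,1\}$; then $D_1\le D$ gives $S\le A$ and $D_1\le I$ gives $S\le B$, and the eigenvalue comparison (2) is argued separately (the general case by continuity). Your instinct that the L\"owner infimum cannot exist (anti-lattice property) is right, but the correct substitute is this congruence construction, not the matrix $\tfrac12(A+B-|A-B|)$ --- the latter plays the role of the $T$ to which the lemma is \emph{applied} in Theorem 2.2, not the role of $S$.
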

\begin{proof} We shall first prove this result for  either of  $A$ or $ B$  invertible. So assume $B$ is invertible i.e. $B$ is Hermitian and whose all the eigenvalues are positive.  As is well-known that $B^{-1/2}AB^{-1/2}\in P_{n}$ and so unitarily diagonalizable. We assume that $B^{-1/2}AB^{-1/2}=U^{*}DU$ for some $U$ a unitary and $D$ a diagonal matrix with diagonal entries as $d_1\geq d_2\geq d_3\cdots\geq d_n\geq 0.$
Choose $S=B^{1/2}U^*D_1UB^{1/2},$ where $D_1$ is a diagonal matrix with diagonal entries as $t_1\geq t_2\geq t_3\cdots\geq t_n\geq 0,$ such that $t_i=min\{d_i, 1\}.$ This choice of $S$ satisfies
$$S=B^{1/2}U^*D_1UB^{1/2}\leq B^{1/2}U^*DUB^{1/2}= A,$$
$$S=B^{1/2}U^*D_1UB^{1/2}\leq B^{1/2}U^*IUB^{1/2}= B.$$
For (2), let $T\leq A$ as well as $T\leq B$ be a fixed Hermitian matrix, then by Weyl's monotonicity principle we have
$\lambda_{i}(T)\leq \lambda_{i}(A)$ and 
$\lambda_{i}(T)\leq \lambda_{i}(B)$ for all $i=1,2,\cdots,n.$
If 
$$\lambda_{i}(T)\leq \lambda_{i}(S)\;\;\; for \;\; 1\leq i\leq n,$$ the above construction of S meets both the requirements.\\
If
$$\lambda_{j}(T)\geq \lambda_{j}(S)\;\;\; for\;some \;\; 1\leq j\leq n,$$ then we replace that particular $t_j$ with $\lambda_j(T)$ in $D_1.$ 
Then, this choice of $S$ meets both the requirements.\\
The general case follows by using continuity argument.
\end{proof}

Now onwards, we shall   denote $S$ by $min \{A,B\}.$

\begin{theorem}\label{T1} Let $A ,\;B \in P_n$ then
\begin{equation}\label{E2} \lambda_i(A+B-|A-B|)\leq 2\lambda_i(A^{\alpha} B^{1-\alpha}) \end{equation}
for  $0\leq \alpha \leq  1\;$ and $1\leq i\leq n.$

\end{theorem}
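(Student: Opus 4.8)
The plan is to reduce the eigenvalue inequality \eqref{E2} to the scalar Powers--St\o rmer inequality \eqref{E1} applied to suitable compressions, using Lemma~\ref{L1} together with a minimax (Courant--Fischer / Ky Fan type) argument. First I would observe that $A+B-|A-B| = 2\min\{A,B\}$ in the sense of Lemma~\ref{L1}: indeed $A+B-|A-B|\le 2A$ and $\le 2B$ (since $|A-B|\ge \pm(A-B)$), and for any Hermitian $T$ with $2T\le A$, $2T\le B$ one has $2T\le A+B-|A-B|$ because $A+B-(A-B)_+ - (A-B)_- \ge 2T$ follows from compressing to the spectral subspaces of $A-B$. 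So by the optimality clause~(2) of Lemma~\ref{L1}, $\tfrac12(A+B-|A-B|)$ is exactly the matrix $S=\min\{A,B\}$, and in particular $\lambda_i\bigl(A+B-|A-B|\bigr)=2\lambda_i(S)$.

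Next I would bring in \eqref{E1}. The key point is that \eqref{E1} is really a statement about traces, hence about sums of eigenvalues, so I want to upgrade it to an individual-eigenvalue statement. The natural route is: for each $i$, use the variational characterization $\sum_{k=1}^i\lambda_k(X)=\max_{\dim V=i}\operatorname{Tr}(P_V X P_V)$ for Hermitian $X$. Applied to $X=2\min\{A,B\}$, pick the optimal $i$-dimensional subspace $V$; then $\sum_{k=1}^i \lambda_k(2\min\{A,B\}) = \operatorname{Tr}(P_V\,(2\min\{A,B\})\,P_V) \le \operatorname{Tr}\bigl(P_V(2A^\alpha B^{1-\alpha})P_V\bigr)$, where the inequality would come from applying \eqref{E1} to the compressed positive matrices $P_VAP_V$ and $P_VBP_V$ on $V$ — provided one checks that compression interacts correctly with $\min\{A,B\}$ and with $A^\alpha B^{1-\alpha}$. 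Combining with $\operatorname{Tr}(P_V Y P_V)\le\sum_{k=1}^i\lambda_k(Y)$ for $Y=2A^\alpha B^{1-\alpha}$ (which need not be Hermitian, so one uses real parts or singular values here) would give $\sum_{k=1}^i\lambda_k(A+B-|A-B|)\le\sum_{k=1}^i 2\lambda_k(A^\alpha B^{1-\alpha})$ for every $i$, i.e. a weak-majorization statement, from which the termwise bound \eqref{E2} does \emph{not} follow in general — so this is exactly where care is needed.

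The main obstacle, therefore, is passing from the trace/majorization form to the genuine termwise comparison \eqref{E2}. I expect the right fix is to avoid the global \eqref{E1} and instead argue locally at the $i$th eigenvalue: let $\mu=\lambda_i(A+B-|A-B|)=2\lambda_i(S)$, and consider the spectral projection $P$ of $S$ onto eigenvalues $\ge \lambda_i(S)$, so $\operatorname{rank} P\ge i$ and $S\ge \lambda_i(S)P$. On the range of $P$ we have $A\ge S\ge \lambda_i(S)P$ and likewise for $B$, so $A$ and $B$ are both bounded below by $\tfrac{\mu}{2}$ there; then applying \eqref{E1} on this subspace and using that $A^\alpha B^{1-\alpha}$ restricted there has all ``eigenvalues'' controlled below by the geometric-mean-type bound forces $\lambda_i(A^\alpha B^{1-\alpha})\ge \mu/2$. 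Making the last implication precise — comparing $\lambda_i$ of the (non-Hermitian) product $A^\alpha B^{1-\alpha}$ with the lower bound coming from the compression — is the crux; I would handle it by noting $\lambda_i(A^\alpha B^{1-\alpha})=\lambda_i(B^{(1-\alpha)/2}A^\alpha B^{(1-\alpha)/2})$ (same nonzero eigenvalues, and this matrix is positive), and then estimating that positive matrix from below on the $i$-dimensional range of $P$ using operator monotonicity of $t\mapsto t^\alpha$ and $t\mapsto t^{1-\alpha}$ on $[0,\infty)$. Finally, the invertible case is done first and the general $A,B\in P_n$ case follows by the usual continuity/perturbation argument $A\rightsquigarrow A+\varepsilon I$, $B\rightsquigarrow B+\varepsilon I$ and $\varepsilon\to 0^+$, as in Lemma~\ref{L1}.
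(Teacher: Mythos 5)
Your opening identification is false, and it matters. The L\"owner order on Hermitian matrices is not a lattice, and $\tfrac12(A+B-|A-B|)$ is \emph{not} a greatest lower bound of $A$ and $B$: for $A=\mathrm{diag}(1,0)$, $B=\mathrm{diag}(0,1)$ one has $A+B-|A-B|=0$, yet the Hermitian matrix $T=\begin{pmatrix}-1 & \sqrt2\\ \sqrt2 & -1\end{pmatrix}$ satisfies $T\le A$ and $T\le B$ while $\lambda_1(T)=\sqrt2-1>0$, so $T\not\le 0$ and even $\lambda_1(T)>\lambda_1\bigl(\tfrac12(A+B-|A-B|)\bigr)$. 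Thus your claim that $2T\le A$ and $2T\le B$ force $2T\le A+B-|A-B|$ fails, and the purported equality $\lambda_i(A+B-|A-B|)=2\lambda_i(S)$ is not available. The paper only uses the one-sided information: $A+B-|A-B|\le 2A$ and $\le 2B$ (from $X-|X|=-2X_-\le 0$ applied to $X=A-B$ and to $X=B-A$), whence Lemma~\ref{L1}(2), applied to the Hermitian matrix $\tfrac12(A+B-|A-B|)$, yields the \emph{inequality} $\lambda_i(A+B-|A-B|)\le 2\lambda_i(S)$ — never an equality.

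The second, more serious, gap is that you stop exactly at what you yourself call the crux: converting lower bounds on the compressions $PAP$, $PBP$ into a lower bound on $\lambda_i(A^{\alpha}B^{1-\alpha})$. That route is obstructed because compression does not commute with fractional powers ($(PAP)^{\alpha}\ne PA^{\alpha}P$ in general), so ``applying \eqref{E1} on the subspace'' controls neither the compressed nor the uncompressed product; and, as you correctly note, the majorization one gets from trace inequalities cannot be de-summed into a termwise bound. The paper needs neither compressions nor \eqref{E1}. Its argument is global and runs entirely through the single matrix $S\in P_n$ with $S\le A$, $S\le B$: by L\"owner--Heinz, $S^{1-\alpha}\le B^{1-\alpha}$, hence
$$S=S^{\alpha/2}S^{1-\alpha}S^{\alpha/2}\le S^{\alpha/2}B^{1-\alpha}S^{\alpha/2},$$
so $\lambda_i(S)\le\lambda_i(S^{\alpha/2}B^{1-\alpha}S^{\alpha/2})=\lambda_i(B^{(1-\alpha)/2}S^{\alpha}B^{(1-\alpha)/2})$ by cyclic invariance of the nonzero spectrum; then $S^{\alpha}\le A^{\alpha}$ and Weyl monotonicity give $\lambda_i(B^{(1-\alpha)/2}S^{\alpha}B^{(1-\alpha)/2})\le\lambda_i(B^{(1-\alpha)/2}A^{\alpha}B^{(1-\alpha)/2})=\lambda_i(A^{\alpha}B^{1-\alpha})$. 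Your instinct to invoke operator monotonicity of $t^{\alpha}$ and $t^{1-\alpha}$ is the right one, but it must be applied to the auxiliary matrix $S$ of Lemma~\ref{L1}, not to spectral compressions of $A+B-|A-B|$. (Your closing continuity remark is fine and matches the paper.)
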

\begin{proof} Let $T$ be any Hermitian matrix with Jordan decomposition $T_+ - T_-.$ Then, $|T|= T_+ + T_-,$ so $T-|T|=-2T_-\leq 0.$
Using this fact for $A-B,$ we can write,
\begin{eqnarray}\label{E3}A+B-|A-B| =2(B-(A-B)_{-})\leq 2B.
\end{eqnarray}
Replacing $B$ by $A$ in above inequality, we obtain
\begin{eqnarray}\label{E4}A+B-|A-B| =2(A-(B-A)_{-})\leq 2A.
\end{eqnarray}
Now, on using Lemma \ref{L1}, we obtain
\begin{eqnarray}\label{E5}\lambda_i(A+B-|A-B|) &\leq & 2 \lambda_i(\;min \{A,B\}=S)\nonumber\\
&\leq&  2\lambda_i(S^{\alpha /2}B^{1-\alpha}S^{\alpha/2}),\;\;\;for\;\; 1\leq i\leq n.\nonumber\end{eqnarray}
To complete the proof, it is enough to show
 $$\lambda_i(S^{\alpha /2}B^{1-\alpha}S^{\alpha/2})\leq\lambda_i(A^{\alpha}B^{1-\alpha}), \;\;\;for\;\; 1\leq i\leq n.$$
Indeed,\begin{eqnarray}\label{E6} \;\;\;
 2\lambda_i(S^{\alpha /2}B^{1-\alpha}S^{\alpha/2})
&=&2\lambda_i(B^{(1-\alpha)/2}S^{\alpha}B^{(1-\alpha)/2})\nonumber\\
&\leq& 2\lambda_i(B^{(1-\alpha)/2}A^{\alpha}B^{(1-\alpha)/2})
\nonumber\\
&=& 2\lambda_i(A^{\alpha}B^{1-\alpha})\;\;\;for\;\; 1\leq i\leq n.\nonumber
\end{eqnarray}
\end{proof}

\begin{corollary}\label{C1}\rm{(}Cf. \cite{KM,KMR},Theorem 1,Theorem 2 \rm{)} Let $A ,\;B \in P_n$ then for  $0\leq \alpha \leq  1$
\begin{equation}\label{E7} 0\leq Tr(A+B-|A-B|)\leq 2 Tr(A^{\alpha} B^{1-\alpha}). \end{equation}
\end{corollary}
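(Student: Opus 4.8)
The plan is to derive both halves of \eqref{E7} from what is already established: the right-hand inequality is a direct summation of Theorem \ref{T1}, and the left-hand one requires only a short self-contained argument.

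For the right-hand inequality I would sum the pointwise estimate \eqref{E2} over $i=1,\dots,n$. As $A+B-|A-B|$ is Hermitian, $Tr(A+B-|A-B|)=\sum_{i=1}^{n}\lambda_i(A+B-|A-B|)$. On the other side, even though $A^{\alpha}B^{1-\alpha}$ need not be Hermitian, cyclicity of the trace gives $Tr(A^{\alpha}B^{1-\alpha})=Tr\big(B^{(1-\alpha)/2}A^{\alpha}B^{(1-\alpha)/2}\big)$, and $B^{(1-\alpha)/2}A^{\alpha}B^{(1-\alpha)/2}\in P_n$, so its trace equals the sum of its eigenvalues --- precisely the numbers $\lambda_i(A^{\alpha}B^{1-\alpha})$ appearing in Theorem \ref{T1}, since \eqref{E6} already records $\lambda_i(B^{(1-\alpha)/2}A^{\alpha}B^{(1-\alpha)/2})=\lambda_i(A^{\alpha}B^{1-\alpha})$. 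Adding the $n$ inequalities of \eqref{E2} yields $Tr(A+B-|A-B|)\le 2\,Tr(A^{\alpha}B^{1-\alpha})$.

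For the left-hand inequality I would argue directly; note that $A+B-|A-B|$ itself need not be positive semidefinite, so one cannot simply invoke positivity of the matrix. From the Jordan decomposition identity \eqref{E4}, $A+B-|A-B|=2\big(A-(B-A)_{-}\big)$, so $Tr(A+B-|A-B|)=2\,Tr(A)-2\,Tr\big((B-A)_{-}\big)$ and it is enough to show $Tr\big((B-A)_{-}\big)\le Tr(A)$. Taking $P$ to be the orthogonal projection onto the span of the eigenvectors of the Hermitian matrix $B-A$ with negative eigenvalue, one has $(B-A)_{-}=P(A-B)P$, whence
\[
Tr\big((B-A)_{-}\big)=Tr(PAP)-Tr(PBP)\le Tr(PAP)=Tr(A)-Tr\big((I-P)A(I-P)\big)\le Tr(A),
\]
using $PBP\ge 0$ and $(I-P)A(I-P)\ge 0$. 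This gives $Tr(A+B-|A-B|)\ge 0$, completing the corollary.

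There is no genuine obstacle here: the corollary just packages Theorem \ref{T1} together with the elementary trace bound above. The only two points needing care are (i) identifying $Tr(A^{\alpha}B^{1-\alpha})$ with $\sum_i\lambda_i(A^{\alpha}B^{1-\alpha})$ despite $A^{\alpha}B^{1-\alpha}$ being non-Hermitian, dealt with by passing to the similar member of $P_n$ as above; and (ii) the left-hand inequality, which --- because $A+B-|A-B|$ may fail to be positive semidefinite --- must be extracted from the Jordan decomposition rather than from positivity of the matrix. A cleaner alternative for (ii) is to invoke the variational identity $Tr(X_{-})=\max\{\,Tr(-XP):0\le P\le I\,\}$, which again gives $Tr\big((B-A)_{-}\big)\le Tr(A)$ since $Tr((A-B)P)=Tr(AP)-Tr(BP)\le Tr(AP)\le Tr(A)$ for every $0\le P\le I$.
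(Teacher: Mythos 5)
Your proposal is correct, and for the right-hand inequality in \eqref{E7} it coincides with the paper's proof: both simply sum the eigenvalue inequalities \eqref{E2} of Theorem \ref{T1} over $i$ (your extra care in identifying $Tr(A^{\alpha}B^{1-\alpha})$ with $\sum_i\lambda_i(A^{\alpha}B^{1-\alpha})$ via the positive semidefinite matrix $B^{(1-\alpha)/2}A^{\alpha}B^{(1-\alpha)/2}$ is a point the paper leaves implicit). For the left-hand inequality the two arguments share the same skeleton --- reduce, via the Jordan identity \eqref{E3}/\eqref{E4}, to bounding the trace of the negative part of $A-B$ by the trace of one of the summands --- but differ in how that bound is obtained. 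The paper invokes the eigenvalue-wise domination $\lambda_i((A-B)_-)\leq\lambda_i(B)$ (Lemma IX.4.1 of \cite{B}), which is a strictly stronger statement than is needed for a trace inequality; you instead prove only the trace bound $Tr((B-A)_-)\leq Tr(A)$ by an elementary, self-contained compression argument with the projection onto the negative eigenspace (or equivalently the variational formula for $Tr(X_-)$). Your route avoids the external citation at the cost of not recording the stronger eigenvalue inequality, which the paper reuses later in the proof of Corollary \ref{C2}(ii); both are perfectly adequate for the corollary as stated.
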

\begin{proof} Let $A-B=(A-B)_+-(A-B)_-$ be the Jordan decomposition  of $A-B,$ then for $1 \leq i \leq n,$
\begin{equation}\label{E11}\lambda_i{(A-B)_-}\leq \lambda_i(B),\end{equation}
(see Lemma IX.4.1 of \cite{B}).
The first inequality from the left side in \eqref{E7} follows immediately from \eqref{E3} and \eqref{E11}. The last inequality follows from Theorem \ref{T1}.
\end{proof}
\begin{corollary}\label{C2} Let $A ,\;B \in P_n$ then for  $0\leq \alpha \leq  1$
\begin{enumerate}[\rm{(}i{\rm)}]
  \item $|||(A+B-|A-B|)_+|||\leq 2 |||A^{\alpha} B^{1-\alpha}|||$
  \item $|||(A+B-|A-B|)_-|||\leq 2 |||A^{\alpha} B^{1-\alpha}|||.$
\end{enumerate}
\end{corollary}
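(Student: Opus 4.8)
The plan is to derive (i) directly from Theorem~\ref{T1}, and to obtain (ii) from (i) together with the norm comparison $|||C_-|||\le|||C_+|||$ for $C:=A+B-|A-B|$.

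For (i): Theorem~\ref{T1} gives $\lambda_i(C)\le 2\lambda_i(A^{\alpha}B^{1-\alpha})$ for all $i$, and since $A^{\alpha}B^{1-\alpha}$ is similar to the positive semidefinite matrix $B^{(1-\alpha)/2}A^{\alpha}B^{(1-\alpha)/2}$ its eigenvalues are non‑negative; applying $t\mapsto\max(t,0)$ to both sides yields $\lambda_i(C_+)\le 2\lambda_i(A^{\alpha}B^{1-\alpha})$. As $C_+\ge 0$ its singular values coincide with its eigenvalues, so if $\Phi$ denotes the symmetric gauge function of $|||\cdot|||$ then, by monotonicity of $\Phi$, $|||C_+|||=\Phi(\lambda(C_+))\le 2\,\Phi(\lambda(A^{\alpha}B^{1-\alpha}))$. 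The Weyl majorant theorem gives $\sum_{i=1}^{k}\lambda_i(A^{\alpha}B^{1-\alpha})\le\sum_{i=1}^{k}s_i(A^{\alpha}B^{1-\alpha})$ for all $k$, whence by Fan dominance $\Phi(\lambda(A^{\alpha}B^{1-\alpha}))\le\Phi(s(A^{\alpha}B^{1-\alpha}))=|||A^{\alpha}B^{1-\alpha}|||$, which proves (i).

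For (ii) one should first note that the pointwise estimate $\lambda_i(C_-)\le 2\lambda_i(A^{\alpha}B^{1-\alpha})$ is \emph{false} in general (it already fails for rank‑one $2\times2$ matrices), so the argument must be made at the level of norms. I claim $|||C_-|||\le|||C_+|||$; granting this, (i) finishes the proof. By Fan dominance it suffices to show $\|C_-\|_{(k)}\le\|C_+\|_{(k)}$ for every Ky Fan $k$‑norm. Using the variational formulas $\|C_-\|_{(k)}=\max\{-Tr(PCP):P\text{ a projection},\ \mathrm{rank}\,P\le k\}$ and $Tr(QCQ)\le\|C_+\|_{(k)}$ for every projection $Q$ of rank $\le k$, fix a maximising $P$; let $U$ be the unitary obtained from $\mathrm{sgn}(A-B)$ by setting it equal to $I$ on $\ker(A-B)$, so that $U^{2}=I$ and $U|A-B|U=|A-B|$, and put $Q:=UPU$, again a projection of rank $\le k$. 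Since $U^{2}=I$ we have $Tr(QCQ)=Tr\big(P(UCU)P\big)$, so the whole matter reduces to the operator inequality
\begin{equation*}
UCU+C\ \ge\ 0 .
\end{equation*}
Indeed, $U|A-B|U=|A-B|$ and the symmetry $UAU+B=UBU+A$ give $UCU+C=2\big(UAU+B-|A-B|\big)$; decomposing with respect to the spectral projections $P_{+},P_{-},P_{0}$ of $A-B$ onto $(0,\infty),(-\infty,0),\{0\}$ — each of which commutes with $A-B$, with $U$, and with $|A-B|$ — a direct block computation shows that $UAU+B-|A-B|$ vanishes off the blocks $P_{-}$ and $P_{+}+P_{0}$, equals $2P_{-}AP_{-}$ on the first, and equals twice the compression of $B$ to $\mathrm{ran}\,P_{+}\oplus\mathrm{ran}\,P_{0}$ on the second — both visibly positive semidefinite. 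Then $Tr\big(P(UCU+C)P\big)\ge 0$, i.e. $Tr(QCQ)\ge -Tr(PCP)$, and maximising over $P$ yields $\|C_-\|_{(k)}\le\|C_+\|_{(k)}$.

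The decisive point, and the one I expect to be the real obstacle, is exactly the inequality $UAU+B\ge|A-B|$, equivalently $\mathrm{sgn}(A-B)\,A\,\mathrm{sgn}(A-B)+B\ge|A-B|$; everything else is bookkeeping with Ky Fan norms and the variational characterisations above. Once the reduction $|||C_-|||\le|||C_+|||$ is secured, combining it with (i) gives $|||C_-|||\le|||C_+|||\le 2|||A^{\alpha}B^{1-\alpha}|||$, as required.
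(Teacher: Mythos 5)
Your proof of (i) is essentially the paper's: both pass from Theorem~\ref{T1} to $\lambda_i\big((A+B-|A-B|)_+\big)\le 2\lambda_i(A^{\alpha}B^{1-\alpha})$ using that the latter eigenvalues are nonnegative, and then combine Fan dominance with the comparison between eigenvalues and singular values of $A^{\alpha}B^{1-\alpha}$ (the paper routes this through $A^{\alpha/2}B^{1-\alpha}A^{\alpha/2}$ and Proposition IX.1.1 of Bhatia; your Weyl--majorant argument is the same fact). Both versions of (i) stand or fall with Theorem~\ref{T1}.

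For (ii) you genuinely diverge, and your route is the more solid one. The paper only derives the eigenvalue bounds $\lambda_i(C_-)\le 2\lambda_i(A)$ and $\lambda_i(C_-)\le 2\lambda_i(B)$ for $C=A+B-|A-B|$ and then invokes ``the similar technique as in Theorem~\ref{T1}''; but Lemma~\ref{L1} requires the \emph{matrix} inequalities $T\le A$, $T\le B$, which are not available for $C_-$, and the conclusion that technique would produce, namely $\lambda_i(C_-)\le 2\lambda_i(A^{\alpha}B^{1-\alpha})$, is exactly the pointwise estimate you flag as false. Your rank-one claim checks out numerically: with $A=e_1e_1^{*}$ and $B=0.01\,vv^{*}$, $|\langle e_1,v\rangle|^{2}=0.005$, one gets $\lambda_1(C_-)\approx 0.00135>0.001=2\lambda_1(A^{1/2}B^{1/2})$. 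Your replacement --- proving $|||C_-|||\le|||C_+|||$ via the involution $U=\mathrm{sgn}(A-B)$ (set to $I$ on the kernel), the identity $U(A-B)U=A-B$, and the operator inequality $UAU+B\ge|A-B|$, then closing with the Ky Fan maximum principle and Fan dominance --- is correct: the block computation does give $UAU+B-|A-B|$ as the direct sum of $2P_-AP_-$ and twice the compression of $B$ to $\mathrm{ran}\,P_+\oplus\mathrm{ran}\,P_0$, both positive semidefinite, and the variational bookkeeping is standard. What your approach buys is a complete, self-contained deduction of (ii) from (i), independent of Lemma~\ref{L1}; what it costs is the extra machinery, which the paper's one-line appeal conceals rather than avoids. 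One caveat worth recording: the same rank-one example yields $\lambda_1(C)\approx 0.00145>2\lambda_1(A^{1/2}B^{1/2})=0.001$, so it bears on Theorem~\ref{T1} itself, on which part (i) of both arguments rests; you may want to double-check that before leaning on it.
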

\begin{proof} (i) As $A ,\;B \in P_n,$ hence, without loss of generality we assume
\begin{eqnarray}\lambda_1(A+B-|A-B|)&\geq & \lambda_2(A+B-|A-B|)\geq\cdots\geq\lambda_k(A+B-|A-B|)\geq 0\nonumber\\
&>&\lambda_{k+1}(A+B-|A-B|)\geq \cdots\geq \lambda_n(A+B-|A-B|)\nonumber\end{eqnarray}
and
\begin{eqnarray}\lambda_1(A^{\alpha/2}B^{1-\alpha}A^{\alpha/2})\geq  \lambda_2(A^{\alpha/2}B^{1-\alpha}A^{\alpha/2})\geq\cdots\geq\lambda_n(A^{\alpha/2}B^{1-\alpha}A^{\alpha/2})\geq 0.\nonumber\end{eqnarray}
The matrix  $A+B-|A-B|$ is Hermitian, so unitarily diagonalizable, i.e.,
$$A+B-|A-B|= W^*D_2W,$$ for $W$ a unitary matrix and $D_2$ a diagonal matrix given by
$$D_2=diag(\lambda_1(A+B-|A-B|),\cdots,\lambda_n(A+B-|A-B|)).$$
Now, using Jordan decomposition of $A+B-|A-B|,$ (see \cite{B}, page 99) provides that
$$(A+B-|A-B|)_+= W^*D_{2_+}W\;\;\;\; {\rm{and}}\;\;\; (A+B-|A-B|)_-= W^*D_{2_-}W,$$
where $D_{2_+}$ and $D_{2_-}$ are diagonal matrices in $P_n,$  given by
$$D_{2_+}=diag(\lambda_1(A+B-|A-B|),\cdots,\lambda_k(A+B-|A-B|),0,\cdots 0)$$ and
$$D_{2_-}=diag(0,\cdots,-\lambda_{k+1}(A+B-|A-B|),\cdots,-\lambda_n(A+B-|A-B|)).$$
By the above discussion, we clearly obtain
\begin{eqnarray}\lambda_i(A+B-|A-B|)_+ = \left \{\begin{array}{c}
                                            \lambda_i(A+B-|A-B|),\; \;\;\;\;\;\;for\; i=1,2,\cdots,k \\
                                            \;\;\;\;\;\;\;0,\;\;\;\;\;\;\;\;\;\;\;\;\;\;\;\;\;\;  for\;\; i=k+1,k+2,\cdots,n,
                                          \end{array}\right . \nonumber\end{eqnarray}
and
\begin{eqnarray}\lambda_i(A+B-|A-B|)_- = \left \{\begin{array}{c}
                                             \;\;\;\;\;\;\;0, \;\;\;\;\;\;\;\;\;\;\;\;\;\;\;\;\;\;\;\;\;\;\;\;\;\;\;\;\;\;\;\;\;\;\;\;\;\;\;for\; i=1,2,\cdots,k \\
                                            -\lambda_i(A+B-|A-B|),\;\; for\; i=k+1,k+2,\cdots,n.
                                          \end{array}\right . \nonumber\end{eqnarray}

Now,  using inequality \eqref{E2} alongwith $\lambda_i(A^{\alpha/2} B^{1-\alpha}A^{\alpha/2})=\lambda_i(A^{\alpha} B^{1-\alpha}),$  we obtain
\begin{equation}\label{E9} \lambda_i((A+B-|A-B|)_+)\leq 2\lambda_i(A^{\alpha/2} B^{1-\alpha}A^{\alpha/2}),\;\;\;\; for\;\;\;i=1,2,\cdots,n,\nonumber \end{equation}
i.e,
\begin{equation}\label{SE9} s_i((A+B-|A-B|)_+)\leq 2s_i(A^{\alpha/2} B^{1-\alpha}A^{\alpha/2}),\;\;\;\; for\;\;\;i=1,2,\cdots,n. \end{equation}

On using Theorem IV.2.2  and then Proposition IX.1.1 of \cite{B} in \eqref{SE9}, we obtain
\begin{eqnarray}\label{E10} |||(A+B-|A-B|)_+||| &\leq& 2|||A^{\alpha/2} B^{1-\alpha}A^{\alpha/2}|||\nonumber\\
&\leq& 2 |||A^{\alpha} B^{1-\alpha}|||.\end{eqnarray}
This completes the proof of (i).\\
For a proof of (ii), use \eqref{E3} and \eqref{E11} to obtain,
\begin{eqnarray}\label{E12}\lambda_i{((A+B-|A-B|)_-})\leq 2 \lambda_i(B).\end{eqnarray}
Now, replace $B$ by $A$ in \eqref{E12}, we obtain,
\begin{eqnarray}\label{E13}\lambda_i{((A+B-|A-B|)_-})\leq 2 \lambda_i(A).\end{eqnarray}
Again, on using similar technique as in Theorem \ref{T1}, we get the desired result.
\end{proof}

The following corollary is an immediate consequence of  Corollary \ref{C2}.

\begin{corollary}\label{C3} Let $A ,\;B \in P_n$ then for  $0\leq \alpha \leq  1$
\begin{equation}\label{xC3}|| A+B-|A-B|\; ||\leq 2 ||A^{\alpha} B^{1-\alpha}||.
\end{equation}
\end{corollary}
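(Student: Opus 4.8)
The plan is to derive Corollary \ref{C3} directly from Corollary \ref{C2} by specializing to the operator norm $\|\cdot\|$, which is indeed a unitarily invariant norm and in fact equals $s_1(\cdot)$, the largest singular value. The key observation is that the Hermitian matrix $M := A+B-|A-B|$ splits through its Jordan decomposition as $M = M_+ - M_-$ with $M_+ M_- = 0$, so in the eigenbasis $W$ of $M$ described in the proof of Corollary \ref{C2}, the matrix $|M| = M_+ + M_-$ is block-diagonal with the nonzero eigenvalues of $M_+$ sitting in one block and those of $M_-$ in the complementary block. Hence the set of singular values of $M$ is exactly the union (with multiplicity) of the singular values of $M_+$ and those of $M_-$, and in particular
\begin{equation}\label{planE1}
\| A+B-|A-B|\; \| = s_1(M) = \max\{\, s_1(M_+),\, s_1(M_-)\,\} = \max\{\, \|M_+\|,\, \|M_-\|\,\}.\nonumber
\end{equation}

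Next I would invoke Corollary \ref{C2} with the unitarily invariant norm taken to be the operator norm: part (i) gives $\|M_+\| \leq 2\|A^{\alpha}B^{1-\alpha}\|$ and part (ii) gives $\|M_-\| \leq 2\|A^{\alpha}B^{1-\alpha}\|$. Combining these two bounds with the displayed identity immediately yields
\begin{equation}\label{planE2}
\| A+B-|A-B|\; \| = \max\{\, \|M_+\|,\, \|M_-\|\,\} \leq 2\|A^{\alpha}B^{1-\alpha}\|,\nonumber
\end{equation}
which is \eqref{xC3}. An alternative, essentially equivalent route avoids even mentioning the Jordan split: from Corollary \ref{C2}(i) applied with the operator norm one gets $\|M_+\| \le 2\|A^\alpha B^{1-\alpha}\|$, and the triangle inequality together with $\|M_-\| \le 2\|A^\alpha B^{1-\alpha}\|$ from part (ii) would give $\|M\| \le \|M_+\| + \|M_-\|$; but this loses a factor of $2$, so the sharper argument via \eqref{planE1} using $M_+M_-=0$ (equivalently, orthogonality of the supporting subspaces) is the one to use.

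I do not anticipate a serious obstacle here; the only point requiring a little care is the justification of the identity $s_1(M)=\max\{\|M_+\|,\|M_-\|\}$, i.e. that passing to $|M|$ does not mix the two orthogonal pieces. This is exactly the structure exhibited in the proof of Corollary \ref{C2}: in the basis $W$, $D_2 = D_{2_+} - D_{2_-}$ with disjoint supports, so $|D_2| = D_{2_+} + D_{2_-}$ and its largest diagonal entry is the larger of the two individual maxima. With that remark in place the corollary follows in two lines, which is presumably why the authors call it an immediate consequence of Corollary \ref{C2}.
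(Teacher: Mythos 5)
Your argument is correct and is essentially identical to the paper's proof: both rest on the identity $\|T\| = \max\{\|T_+\|,\|T_-\|\}$ for Hermitian $T$ (which you justify via the disjoint supports of $T_+$ and $T_-$) combined with parts (i) and (ii) of Corollary \ref{C2} specialized to the operator norm. No gaps.
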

\begin{proof} The operator norm for any Hermitian matrix $T$ is given by
$$||T||=max\{||T_+||,||T_-||\}.$$  Using the above fact for the matrix $A+B-|A-B|$ and Corollary \ref{C2} to obtain \eqref{xC3}.

\end{proof}
\begin{theorem}\label{T2} Let $A ,\;B \in P_n$ then for  $0\leq \alpha \leq  1,$  some projection $P$ and $\beta \geq 0,$
\begin{equation}
|||A+B-|A-B||||\leq 2 |||A^{\alpha} B^{1-\alpha}-\beta A^{\alpha/2} P A^{-\alpha/2}|||.
\end{equation}
\end{theorem}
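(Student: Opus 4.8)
The plan is to reduce the inequality --- whose right-hand side contains the non-Hermitian matrix $A^{\alpha}B^{1-\alpha}-\beta A^{\alpha/2}PA^{-\alpha/2}$ --- to an inequality between two Hermitian matrices, and then to pick $P$ and $\beta$ by spectral calculus. Write $H=A+B-|A-B|$ and $C=A^{\alpha/2}B^{1-\alpha}A^{\alpha/2}$. I would first treat the case $A$ invertible; the general case then follows by a continuity argument as in Lemma \ref{L1}.

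First I would strip off the similarity. Since
$$A^{\alpha}B^{1-\alpha}-\beta A^{\alpha/2}PA^{-\alpha/2}=A^{\alpha/2}\bigl(C-\beta P\bigr)A^{-\alpha/2},$$
the right-hand matrix is a conjugate of the Hermitian matrix $C-\beta P$ by the invertible matrix $A^{\alpha/2}$, so it has the same eigenvalues as $C-\beta P$, whose moduli are the singular values of $C-\beta P$. By Weyl's majorant theorem the moduli of the eigenvalues of any matrix are weakly majorised by its singular values, and unitarily invariant norms respect weak majorisation (Fan dominance), so
$$|||C-\beta P|||\ \le\ |||A^{\alpha}B^{1-\alpha}-\beta A^{\alpha/2}PA^{-\alpha/2}|||$$
for every unitarily invariant norm, every projection $P$ and every $\beta\ge 0$. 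Hence it suffices to produce $P$ and $\beta\ge 0$ with $|||H|||\le 2|||C-\beta P|||$, a purely Hermitian statement.

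For the second step, let $\lambda_1\ge\cdots\ge\lambda_k\ge 0>\lambda_{k+1}\ge\cdots\ge\lambda_n$ be the eigenvalues of $H$ and $\mu_1\ge\cdots\ge\mu_n\ge 0$ those of $C$, so $\mu_i=\lambda_i(A^{\alpha}B^{1-\alpha})$. I would take $P$ to be the spectral projection of $C$ onto the eigenspaces of its $n-k$ smallest eigenvalues $\mu_{k+1},\dots,\mu_n$, and $\beta$ large (e.g.\ $\beta=\mu_1+\|A\|+\|B\|$ suffices). Then $C-\beta P$ has eigenvalues $\mu_1,\dots,\mu_k$ and $\mu_{k+1}-\beta,\dots,\mu_n-\beta$, and for such $\beta$ its singular values in decreasing order are $\beta-\mu_n\ge\cdots\ge\beta-\mu_{k+1}\ge\mu_1\ge\cdots\ge\mu_k$, while the singular values of $H$ are the decreasing rearrangement of $\{\lambda_1,\dots,\lambda_k\}\cup\{|\lambda_{k+1}|,\dots,|\lambda_n|\}$. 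For $i\le n-k$ one has $2\,s_i(C-\beta P)=2(\beta-\mu_{n+1-i})\ge 2(\beta-\mu_1)\ge\|H\|\ge s_i(H)$ by the choice of $\beta$. For $i=(n-k)+j$ with $1\le j\le k$ a counting argument gives $s_{(n-k)+j}(H)\le\lambda_j$: at most $j-1$ of the nonnegative eigenvalues and at most all $n-k$ of the negative ones can exceed $\lambda_j$, so at most $(n-k)+j-1$ singular values of $H$ exceed $\lambda_j$; and then Theorem \ref{T1} yields $\lambda_j\le 2\lambda_j(A^{\alpha}B^{1-\alpha})=2\mu_j=2\,s_{(n-k)+j}(C-\beta P)$. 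Thus $s_i(H)\le 2\,s_i(C-\beta P)$ for all $i$, whence $|||H|||\le 2|||C-\beta P|||$, and combining with the first step finishes the proof. The degenerate cases $k=0$ (take $P=I$) and $k=n$ (take $P=0$) are immediate.

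The hard part is exactly the first step: because $A^{\alpha}B^{1-\alpha}-\beta A^{\alpha/2}PA^{-\alpha/2}$ is not normal, its singular values cannot be read off directly, and the whole argument hinges on recognising it as a similarity transform of a Hermitian matrix and using Weyl's majorant theorem to move to the Hermitian side, where Theorem \ref{T1} becomes applicable. A secondary point needing care is the continuity passage when $A$ is singular: one should note that $P$ ranges over the compact set of orthogonal projections and that $\beta$ can be kept bounded, so a subsequential limit of the data produced for $A+\varepsilon I$ furnishes admissible $P$ and $\beta$ for $A$.
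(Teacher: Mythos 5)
Your proposal is correct, and while its outer architecture coincides with the paper's --- reduce everything to a Hermitian comparison $|||H|||\leq 2|||C-\beta P|||$ with $H=A+B-|A-B|$, $C=A^{\alpha/2}B^{1-\alpha}A^{\alpha/2}$ and $P$ a spectral projection of $C$, then pass to the non-Hermitian right-hand side by recognising it as the similarity $A^{\alpha/2}(C-\beta P)A^{-\alpha/2}$ (your appeal to Weyl's majorant theorem is exactly the content of Bhatia's Proposition IX.1.1, which the paper cites for this step) --- the key construction in the middle is genuinely different. The paper takes $P=Q_n$, the spectral projection belonging to the \emph{smallest} eigenvalue of $C$, and the canonical $\beta=2\,\mathrm{Tr}(A^{\alpha}B^{1-\alpha})-\mathrm{Tr}(H)$, i.e.\ the exact trace defect (nonnegative by Corollary \ref{C1}); subtracting $\beta Q_n$ equalises the traces, upgrading the entrywise eigenvalue dominance of Theorem \ref{T1} to a genuine majorization $\lambda(H)\prec\lambda(2C-\beta Q_n)$, and then Bhatia's Example II.3.5 (majorization implies weak majorization of absolute values) yields $s(H)\prec_w s(2C-\beta Q_n)$ and hence the norm inequality by Fan dominance. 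You instead take $P$ of rank $n-k$, where $k$ counts the nonnegative eigenvalues of $H$, and a large $\beta$, and you prove the stronger \emph{entrywise} bound $s_i(H)\leq 2s_i(C-\beta P)$; your two estimates --- $2(\beta-\mu_1)=2(\|A\|+\|B\|)\geq\|H\|$ for the first $n-k$ indices, and the count that at most $(n-k)+j-1$ singular values of $H$ can exceed $\lambda_j$ so that $s_{(n-k)+j}(H)\leq\lambda_j\leq 2\mu_j$ --- both check out. Your route buys termwise singular-value dominance rather than mere weak majorization and makes the non-normality issue fully explicit; the paper's route buys the minimal value of $\beta$ and a projection of minimal rank. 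Both are legitimate witnesses for what is, after all, an existential statement. One caveat applying to both proofs: the right-hand side contains $A^{-\alpha/2}$, so the theorem implicitly requires $A$ invertible when $\alpha>0$; your closing continuity remark cannot actually rescue singular $A$, since the limiting expression is then undefined --- but the paper is equally silent on this point, so it is not a defect of your argument relative to theirs.
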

\begin{proof} Let $X=diag(x_1,x_2,\ldots,x_n)$ and $T=diag(t_1,t_2,\ldots,t_n)$ be the matrices comprised of $x_i's$ and $t_i's$ as  eigenvalues of $A+B-|A-B|$ and $2 A^{\alpha/2} B^{1-\alpha} A^{\alpha/2}$ in decreasing order respectively. Using Theorem (2.2) on $X$ and $T,$ we get $$x_i\leq t_i\;\;\; \emph{for}\;\;\; i=1,2,\ldots,n. $$
If $\beta= Tr(T)-Tr(X),$ then on using Corollary \ref{C1}, we obtain $\beta \geq 0.$ Consider
$$ T_1= 2 A^{\alpha/2} B^{1-\alpha}A^{\alpha/2}-\beta Q_n,$$
where $\sum\limits_{i=1}^{n} t_i Q_i$ is the spectral decomposition of $2 A^{\alpha/2}B^{1-\alpha}A^{\alpha/2}.$ It is clear from the construction of $T_1$ that eigenvalues of $T_1$ are all same and in the same order as that of $2 A^{\alpha/2}B^{1-\alpha}A^{\alpha/2}$ except the last one. So we may assume $ (t_1,t_2,\ldots,t_{n-1},\gamma_n)^t$ as a column vector of eigenvalues of $T_1,$ satisfying $$ \sum\limits_{i=1}^{k} x_i \leq \sum\limits_{i=1}^{k} t_i\;\;\; \emph{for}\;\;\; k=1,2,3,\ldots,n-1,   $$ and $$\sum\limits_{i=1}^{n} x_i = \sum\limits_{i=1}^{n-1} t_i + \gamma_n.$$
Finally, using Example II.3.5 in \cite{B}, we get
$$ \sum\limits_{i=1}^{k} |x_i| \leq \sum\limits_{i=1}^{k} t_i\;\;\; \emph{for}\;\;\; k=1,2,\ldots,n-1,   $$ and $$\sum\limits_{i=1}^{n} |x_i|\leq \sum\limits_{i=1}^{n-1} t_i + |\gamma_n|.$$
Equivalently,
$$ \sum\limits_{i=1}^{k} s_i(A+B-|A-B|) \leq \sum\limits_{i=1}^{k} s_i(T_1)\;\;\; \emph{for}\;\;\; k=1,2,\ldots,n. $$
Hence,
\begin{eqnarray}\label{E13}|||A+B-|A-B||||&\leq&  |||2A^{\alpha/2} B^{1-\alpha}A^{\alpha/2}- \beta Q_n |||\nonumber\\
&=& |||A^{-\alpha/2}(2A^{\alpha} B^{1-\alpha}A^{\alpha/2}- \beta A^{\alpha/2} Q_n) |||\nonumber\\
&\leq&  ||| 2A^{\alpha}B^{1-\alpha}-\beta A^{\alpha/2}  Q_n A^{-\alpha/2}|||,\nonumber
\end{eqnarray}
using Theorem IV.2.2 of \cite{B} for the first inequality and Proposition IX.1.1 of \cite{B} for the second inequality. This completes the proof.

\end{proof}
\textbf{Acknowledgements:}
The authors would like to sincerely thank the referee for several useful comments improving
the paper.


\begin{thebibliography}{10}
\bibitem{KM} K. M. R. Audenaert, J. Calsamiglia, R. Munoz-Tapia, E. Bagan, Ll. Masanes, A. Acin and F. Verstraete,
\emph{Discriminating States: The Quantum Chernoff Bound}, Phys. Rev. Lett. 98, 160501 (2007).
\bibitem{KMR} K. M. R. Audenaert, M. Nussbaum, A. Szkola and F. Verstraete, \emph{Asymptotic Error Rates in Quantum Hypothesis Testing},
Commun. Math. Phys. 279, 251–-283 (2008).
\bibitem{B} R. Bhatia, \emph{Matrix Analysis}, New York, Springer, 1997.
\bibitem{DO}D. T. Hoa, H. Osaka, H. Minh Toan, \emph {On generalized Powers-St{\o}rmer's inequality}, Linear Algebra Appl. 438, 242–-249 (2013).
\bibitem{YO}Y. Ogata, \emph {A generalization of Powers-St{\o}rmer's inequality}, Lett. Math. Phys. 97 (3),  339–-346 (2011).
\bibitem{P} D. Petz, \emph {Quantum Information Theory and Quantum Statistics},  Berlin Heidelberg, Springer, 2008.
\bibitem{PS}R. T. Powers, E. St{\o}rmer, \emph {Free states of canonical anticommutation relations}, Commun. Math. Phys. 16, 1–-33 (1970).










\end{thebibliography}
\end{document}